\documentclass{article}
\usepackage[utf8]{inputenc}
\usepackage{enumerate}
\usepackage{amssymb, bm}
\usepackage{amsthm}
\usepackage{amsmath}
\usepackage{tikz-cd} 
\usepackage{tikz}
\usepackage{bbm}
 \usepackage[all]{xy}
\newtheorem{mythm}{Theorem}
\newtheorem{mylem}{Lemma}

\newtheorem{myex}{Example}
\newtheorem{mycor}{Corollary}
\newtheorem{mydef}{Definition}
\newtheorem{myrem}{Remark}
\setlength{\oddsidemargin}{3.5mm}
\setlength{\evensidemargin}{3.5mm}
\setlength{\textwidth}{15cm}
\setlength{\textheight}{24cm}
\setlength{\topmargin}{-2cm}
\title{Note on asymptotic behaviour of the canonical ring}
\author{Xiaojun WU}
\date{\today}

\begin{document}
\maketitle
  \def\tors{\mathrm{Tors}}
\def\cI{\mathcal{I}}
\def\Z{\mathbb{Z}}
\def\Q{\mathbb{Q}}  \def\C{\mathbb{C}}
 \def\R{\mathbb{R}}
 \def\N{\mathbb{N}}
 \def\H{\mathbb{H}}
  \def\P{\mathbb{P}}
 \def\rC{\mathcal{C}}
  \def\nd{\mathrm{nd}}
  \def\d{\partial}
 \def\dbar{{\overline{\partial}}}
\def\dzbar{{\overline{dz}}}
 \def\ii{\mathrm{i}}
  \def\d{\partial}
 \def\dbar{{\overline{\partial}}}
\def\dzbar{{\overline{dz}}}
\def \ddbar {\partial \overline{\partial}}
\def\cN{\mathcal{N}}
\def\cM{\mathcal{M}}
\def\cE{\mathcal{E}}  \def\cO{\mathcal{O}}
\def\cF{\mathcal{F}}
\def\cS{\mathcal{S}}
\def\cQ{\mathcal{Q}}
\def\P{\mathbb{P}}
\def\cI{\mathcal{I}}
\def \loc{\mathrm{loc}}
\def \cC{\mathcal{C}}
\bibliographystyle{plain}
\def \dim{\mathrm{dim}}
\def \Sing{\mathrm{Sing}}
\def \Id{\mathrm{Id}}
\def \rank{\mathrm{rank}}
\def \tr{\mathrm{tr}}
\def \Ric{\mathrm{Ric}}
\def \Vol{\mathrm{Vol}}
\def \RHS{\mathrm{RHS}}
\def \liminf{\mathrm{liminf}}
\def \ker{\mathrm{Ker}}
\def \nn{\mathrm{nn}}
\def \div{\mathrm{div}}
\def \PSSF{\mathrm{PSSF}}
\def\stateparagraph{\vskip7pt plus 2pt minus 1pt\noindent}
\begin{abstract}
The theory of the Oukounkov body is a useful tool for studying the asymptotic behaviour of the canonical ring of a line bundle over a projective manifold.
In this note, combined with the algebraic reduction, we study the asymptotic behaviour of the canonical ring of a line bundle over an arbitrary compact normal irreducible complex space.
\end{abstract}

The general setting is the following.
Let $X$ be a compact normal irreducible (reduced) complex space.
We denote the meromorphic function field over $ X$ by $\cM(X)$ over $X$.
By \cite{Thi54}, \cite{Rem56}, \cite{AS71}, it is a finitely generated extension over $\C$.
In particular, there exists a (reduced irreducible) projective variety $Y$ such that $\cM(X)$ is isomorphic to the rational function field of $Y$ (called a model of $\cM(X)$).
Any two models are bimeromorphic.
Let $L$ be a line bundle over $X$ (or a Cartier divisor if the space is singular). 
Classically, the canonical ring of $L$ is defined
to be 
$$R(X,L):= \oplus_{k \geq 0} H^0(X, kL)$$
and we denote $\N(L):=\{k \in \N, h^0(X,kL) \neq 0\}$.
In all the note, we assume that $L$ is $\Q-$effective, that is $\N(L) \neq \{0\}$.
Otherwise, the Kodaira-Iitaka dimension of $L$ is $-\infty$.

Let $v$ be a valuation of $\cM(X)$
(i.e. a group homomorphism $v: \cM(X)^* \to \Z^k$ for some $k \in \N^*$ such that
$v(f + g) \geq \min \{v(f ), v(g)\}$ for any $f, g \in \cM(X)^*$ with respect to some total order on $\Z^k$).
Assume in this paragraph that $X$ is projective.
The theory of Okounkov's body produces a tool to study the asymptotic behaviour of the canonical ring of $L$ in analysing the image of the valuation.
The theory is developed independently by Lazarsfeld and
Mustaţǎ \cite{LM09} and Kaveh and Khovanskii \cite{KK12} which gives a systematic study of the Okounkov's construction \cite{Oko96},  \cite{Oko00}.
In particular, we can show that the limit
$$\lim_{k \in \N(L), k \to \infty} \frac{h^0(X,kL)}{k^{\kappa(L)}}$$
exists where $\kappa(L)$ is the Kodaira-Iitaka dimension of $L$.
Notice that by the definition of the Kodaira-Iitaka dimension of $L$, we have a priori that the limit superior 
$$\limsup_{k \in \N(L), k \to \infty} \frac{h^0(X,kL)}{k^{\kappa(L)}}$$
exists and is strictly positive.
Notice that the canonical ring is a bimeromorphic invariant by the projection formula. 
In other words, if $\nu: \tilde{X} \to X$ is a modification of $X$, $R(X,L) \simeq R(\tilde{X}, \nu^* L)$.


In our general situation, the centre of a valuation does not necessarily exist on $X$.
Notice that the existence of the centre in the projective setting is deduced from the valuation characterisation of the properness of a scheme.
Therefore, the centre exists on any model of $\cM(X)$ since it is projective, although it is not necessarily a bimeromorphic model for a non-projective irreducible complex space.
So to study the asymptotic behaviour of the canonical ring $R(X, L)$ by the valuation approach,
we choose a model such that the canonical ring of $L$ is isomorphic to some canonical ring of some $\Q-$line bundle over this model.
The existence of such $\Q-$line bundle is shown by the following fundamental Theorem 1 of Campana.

To prepare the proof, let's recall some definitions introduced by Campana.
\begin{mydef}(Definition 1.21 \cite{Cam04})

Let $f: X \to Y$ be a holomorphic fibration between compact manifolds (i.e. surjective with connected fibres), and
$S$ be an effective divisor on $X$. We say that $S$ is partially supported on the fibres of $f$ if $f(S) \neq Y$ and if for any irreducible component $T$ of $f (S)$
of codimension one in $Y$, then $f^{-1}(T)$
contains an irreducible component mapping onto $T$ by $f$, but not contained in support of $S$.
\end{mydef}

We have the following basic property.
\begin{mylem}(Lemma 1.22 \cite{Cam04})

Let $f : X \to Y$ be a holomorphic fibration between
manifolds, and $S$ be a divisor of $X$ partially supported on the fibres of $f$. Let
$L$ be a line bundle on $Y$. The natural injection of sheaves $L \subset f_* ( f^* (L) \otimes \cO( S))$
is an isomorphism.

\end{mylem}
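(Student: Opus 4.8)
The plan is to verify that the injection is an isomorphism locally on $Y$, which lets me trivialise $L$ and reduce to the case $L = \cO_Y$. The injection itself comes for free: because $f$ is a fibration with compact connected fibres we have $f_* \cO_X = \cO_Y$, so by the projection formula $f_*(f^* L) = L \otimes f_* \cO_X = L$, and tensoring the inclusion $\cO_X \hookrightarrow \cO_X(S)$ (valid since $S$ is effective) with $f^* L$ and applying the left-exact functor $f_*$ yields the stated natural injection $L \hookrightarrow f_*(f^* L \otimes \cO(S))$. Everything therefore reduces to surjectivity, i.e. to showing that on a small open set $U \subset Y$ every meromorphic function $\phi$ on $f^{-1}(U)$ with $\div(\phi) + S \geq 0$ is in fact holomorphic and constant along the fibres, hence descends to a holomorphic function on $U$.

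First I would descend $\phi$ to the base. Since $f(S) \neq Y$ and, by Remmert's proper mapping theorem, $f(\mathrm{Supp}\, S)$ is a proper analytic subset, over the complement $U' := U \setminus f(\mathrm{Supp}\, S)$ the fibres of $f$ avoid $S$ entirely, so $\phi$ is holomorphic there; restricted to each compact connected fibre it is a global holomorphic function, hence constant. Thus $\phi$ is constant along the generic fibres, and by the standard descent for meromorphic functions along a fibration there is a meromorphic function $\psi$ on $U$ with $\phi = f^* \psi$. The polar set of $\psi$ is contained in $f(\mathrm{Supp}\, S)$, so its codimension-one part is supported on the codimension-one irreducible components $T$ of $f(S)$.

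The heart of the argument is to rule out poles of $\psi$ along each such $T$, and this is exactly where the hypothesis that $S$ is partially supported on the fibres enters. By definition $f^{-1}(T)$ contains an irreducible component $W$ with $f(W) = T$ and $W \not\subseteq \mathrm{Supp}\, S$. Since the coefficient of $W$ in $S$ is zero while $\div(\phi) + S \geq 0$, the function $\phi$ has no pole along $W$, i.e. $\phi$ is holomorphic at the generic point of $W$. On the other hand, if $\psi$ had a pole of order $m \geq 1$ along $T$, then writing $T = \{t = 0\}$ locally we would have $f^* \psi = f^*(u)/(f^* t)^m$ with $f^* t$ vanishing along $W$ (as $W \subseteq f^{-1}(T)$), so $\phi = f^* \psi$ would acquire a pole along $W$, a contradiction. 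Hence $\psi$ is regular at the generic point of every such $T$.

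Therefore $\psi$ is a meromorphic function on the manifold $U$ with no poles in codimension one, so by the Riemann (Hartogs) extension theorem it is holomorphic; consequently $\phi = f^* \psi$ is holomorphic on $f^{-1}(U)$, which proves surjectivity and hence the isomorphism. The step I expect to be the main obstacle is the pole comparison across $T$: the entire weight of the ``partially supported'' hypothesis is carried by the single dominating component $W$ lying outside $\mathrm{Supp}\, S$, and one must argue carefully that regularity of $\phi$ along this one component of $f^{-1}(T)$ suffices to force regularity of $\psi$ along all of $T$, using crucially that $f(W) = T$. A secondary technical point will be justifying $\phi = f^* \psi$ as an identity of meromorphic functions on all of $f^{-1}(U)$ rather than merely over the open locus $U'$.
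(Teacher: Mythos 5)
Your proof is correct and follows essentially the same route as the paper's (itself a sketch of Campana's Lemma 1.22): descend the section to a meromorphic function on the base, then use the component of $f^{-1}(T)$ dominating $T$ and lying outside $\mathrm{Supp}\,S$ to rule out poles in codimension one. You simply fill in the details — notably the pole-order comparison along $W$ using $f(W)=T$ — that the paper compresses into the sentence ``since $S$ is partially supported on the fibres, the meromorphic function has to be holomorphic.''
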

\begin{proof}
We sketch the proof for the reader's convenience.
Let $U$ be a local Stein open set on $Y$ such that $L$ is trivial over $U$.  
Since $S$ is partially supported on the fibres of $f$ (more precisely, $f(S) \neq Y$)
and $U$ is Stein,
there exists an effective divisor $T$ on $U$ such that $\cO(S) \subset f^* \cO(T)$.
Thus
$$f_* \cO(S) \subset \cO(T) \subset \cM_U.$$
Any section of $f_* \cO(S)$ on $U$ is some meromorphic function on $U$ whose pull back by $f$ is a meromorphic function on $f^{-1}(U)$ with at worst poles along $S$. 
Since $S$ is partially supported on the fibres of $f$, the meromorphic function has to be holomorphic (i.e. $f_* \cO(S)=\cO_U$).
\end{proof}

\begin{mydef}(Definition 1.2 \cite{Cam04})

Assume that $f : X \to Y$ is a holomorphic fibration between (connected) compact manifolds.
An irreducible divisor $D$ on $X$ is said to be $f$-exceptional if $f (D)$ has
codimension at least 2 in $Y$. We say that $f : X \to Y$ is neat if there moreover exists a bimeromorphic
holomorphic map $u : X \to X'$ with $X'$ smooth such that each $f-$exceptional
divisor of $X$ is also $u-$exceptional. 
\end{mydef}
Note that an $f-$exceptional divisor is partially supported on the fibres of $f$.
We have the following lemma by resolution of singularities \cite{Hir64} and Hironaka's flattening theorem \cite{Hir75}.
\begin{mylem}(Lemma 1.3 \cite{Cam04})
Assume that $f : X \to Y$ is a holomorphic fibration between (connected) compact manifolds.
Then, there exists a neat model
 $f' : X' \to  Y'$ of $f$ and a bimeromorphic map $u :
 X' \to X$ 
 $$\begin{tikzcd}
X' \arrow[r, "u"] \arrow[d, "f'"] & X \arrow[d, "f"] \\
Y' \arrow[r]           & Y          
\end{tikzcd}$$
such that each $f'-$exceptional
divisor of $X'$ is also $u-$exceptional.
\end{mylem}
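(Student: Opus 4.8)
The plan is to reduce the statement to Hironaka's flattening theorem, exploiting the following key observation: \emph{a flat fibration over a smooth base has no exceptional divisors at all.} Indeed, if $g : Z \to W$ is flat of finite type with $W$ smooth and $Z$ irreducible, then every non-empty fibre is equidimensional of dimension $d := \dim Z - \dim W$, by the fibre-dimension formula $\dim_z Z_{g(z)} = \dim_z Z - \dim_{g(z)} W$ valid for flat morphisms over a regular base. Consequently, for any prime divisor $D \subset Z$ one has $\dim D = \dim Z - 1$, while $D \subset g^{-1}(g(D))$ forces $\dim D \leq \dim g(D) + d$; combining the two gives $\mathrm{codim}_W\, g(D) \leq 1$, so $D$ is never $g$-exceptional. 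This is what makes neatness cheap to arrange once flatness is achieved.

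First I would apply the flattening theorem to $f : X \to Y$. Blowing up smooth centres in $Y$ (which keeps the base smooth, as $Y$ is already a manifold) produces a modification $\tau : Y' \to Y$ with $Y'$ smooth such that the strict transform $X_1$ of $X$ under the base change $Y' \to Y$ is flat over $Y'$; write $f_1 : X_1 \to Y'$ for the induced map. Since $\tau$ is an isomorphism over a dense open $V \subset Y$, the projection $p : X_1 \to X$ is an isomorphism over $f^{-1}(V)$, hence bimeromorphic, and $X_1$ is irreducible. The map $f_1$ is still a fibration: its generic fibre agrees with that of $f$ and is therefore connected, and as $f_1$ is proper with $Y'$ normal, Stein factorisation together with connectedness of the generic fibre forces every fibre to be connected.

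Next I would resolve the singularities of $X_1$ by a modification $\rho : X' \to X_1$ with $X'$ smooth, and set $u := p \circ \rho : X' \to X$ and $f' := f_1 \circ \rho : X' \to Y'$. Then $u$ is bimeromorphic and $f' : X' \to Y'$ is a fibration between compact manifolds. It remains to check that every $f'$-exceptional divisor of $X'$ is $u$-exceptional. A prime divisor $D \subset X'$ is either the strict transform of a prime divisor $D_1 \subset X_1$ or a $\rho$-exceptional divisor. In the first case $f'(D) = f_1(D_1)$, which by the observation above has codimension $\leq 1$ in $Y'$ (as $f_1$ is flat over the smooth $Y'$), so $D$ is not $f'$-exceptional. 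In the second case $\rho(D)$ has codimension $\geq 2$ in $X_1$, whence $\dim u(D) = \dim p(\rho(D)) \leq \dim \rho(D) \leq \dim X_1 - 2 = \dim X - 2$, so $D$ is $u$-exceptional. Thus every $f'$-exceptional divisor is $\rho$-exceptional, hence $u$-exceptional. Finally, since $X$ itself is smooth and $u : X' \to X$ is a bimeromorphic holomorphic map under which each $f'$-exceptional divisor is $u$-exceptional, the map $u$ witnesses that $f'$ is neat, completing the construction.

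The main obstacle is the flattening step and its bookkeeping: one must invoke Hironaka's flattening so as to keep $Y'$ smooth and the main (strict-transform) component $X_1$ irreducible and genuinely flat over all of $Y'$ --- not merely equidimensional on a dense open subset --- since the entire argument rests on the fibre-dimension formula holding at every point. The remaining ingredients (resolution of $X_1$, the dimension count for $\rho$-exceptional divisors, and the Stein-factorisation argument for connectedness of the fibres of $f'$) are routine once flatness over the smooth base is secured.
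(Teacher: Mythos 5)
The paper gives no proof of this lemma at all---it simply attributes it to resolution of singularities and Hironaka's flattening theorem---and your proposal is a correct fleshing-out of exactly that strategy: flatten over a modification of $Y$, desingularize the strict transform, and observe that flatness forces every divisor upstairs to dominate a divisor downstairs, so the only candidates for $f'$-exceptional divisors are the ones created by the resolution, which are automatically $u$-exceptional. The dimension count and the Stein-factorisation argument for connectedness of fibres are both fine.

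One imprecision worth fixing: Hironaka's flattening theorem does \emph{not} assert that the centres of the blow-ups are smooth, so the intermediate base $Y_1$ produced by flattening may be singular and your parenthetical ``which keeps the base smooth'' is unjustified. The standard patch is to follow the flattening by a resolution $Y' \to Y_1$ and replace $X_1$ by the strict transform in $X_1 \times_{Y_1} Y'$. Flatness itself need not survive this second base change, but all your argument uses is equidimensionality of the fibres, and that does survive: fibres of the strict transform over $Y'$ sit inside fibres of $X_1 \to Y_1$, hence have dimension at most $d = \dim X - \dim Y$, while the general lower bound for a dominant map of irreducible spaces gives dimension at least $d$. With that adjustment the proof is complete and agrees with the route the paper indicates.
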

The basic property of neat morphism is the following.
\begin{mylem}
Assume that $f : X \to Y$ is a neat holomorphic fibration between (connected) compact manifolds.
Let $u : X \to X'$ with $X'$ smooth such that each $f-$exceptional
divisor of $X$ is also $u-$exceptional. 
Let $E$ be a $f-$exceptional
divisor of $X$ (hence $u-$exceptional) and $L$ a line bundle over $X'$.
Then the restriction induces an isomorphism
$$H^0(X, u^* L) \simeq H^0(X \setminus E, u^*L ).$$
In particular, the multiplication with the canonical section $s_E$ of $E$ induces an isomorphism
$$H^0(X, u^* L) \simeq H^0(X, u^*L+E).$$
\end{mylem}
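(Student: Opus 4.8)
The plan is to reduce the extension problem on $X$ to an extension across an analytic subset of codimension at least $2$ on the smooth (hence normal) manifold $X'$, where the second Riemann extension theorem applies, and then to transport the resulting section back to $X$ along $u$ using the projection formula together with the identity theorem.

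Injectivity of the restriction map $H^0(X, u^* L) \to H^0(X \setminus E, u^* L)$ is immediate: $E$ is a divisor in the connected manifold $X$, so $X \setminus E$ is dense, and a holomorphic section vanishing on a dense open set vanishes identically. For surjectivity I would proceed as follows. Since $E$ is $u$-exceptional, $A := u(E)$ is an analytic subset of $X'$ of codimension at least $2$ (it is analytic by Remmert's proper mapping theorem, as $u$ is proper). Put $V := X' \setminus A$ and $U := u^{-1}(V) = X \setminus u^{-1}(A)$. Since $E \subset u^{-1}(A)$ we have $U \subset X \setminus E$, and $U$ is the complement in $X$ of a proper analytic subset, hence dense. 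Now take $s \in H^0(X \setminus E, u^* L)$ and restrict it to $U$. The restriction $u|_U : U \to V$ is a proper bimeromorphic morphism onto the normal space $V$, so the projection formula gives $(u|_U)_* \cO_U = \cO_V$ and hence a section $\sigma \in H^0(V, L)$ with $u^* \sigma = s|_U$. Because $X'$ is smooth and $A$ has codimension at least $2$, the second Riemann extension theorem extends $\sigma$ to a global section $\tilde\sigma \in H^0(X', L)$. The pullback $u^* \tilde\sigma \in H^0(X, u^* L)$ then agrees with $s$ on the dense open set $U$, hence on all of $X \setminus E$ by the identity theorem; thus $u^* \tilde\sigma$ restricts to $s$, which proves surjectivity and establishes the first isomorphism.

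The second isomorphism follows formally. Multiplication by the canonical section $s_E$ of $\cO(E)$ maps $H^0(X, u^* L)$ into $H^0(X, u^* L + E)$ and is injective, since $s_E$ is not a zero divisor. Conversely, given $\tau \in H^0(X, u^* L + E)$, the quotient $\tau / s_E$ is a holomorphic section of $u^* L$ over $X \setminus E$; by the first isomorphism it extends to some $s \in H^0(X, u^* L)$, and then $s \cdot s_E = \tau$ on the dense set $X \setminus E$, hence everywhere. Thus multiplication by $s_E$ is an isomorphism.

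The crux is the surjectivity in the first isomorphism, and the hypothesis that $E$ is $u$-exceptional is exactly what makes it work: for a general divisor $E$ the restriction map is far from surjective, as sections typically acquire poles along $E$. It is the condition $\mathrm{codim}\, u(E) \geq 2$ that permits the descended section $\sigma$ to be extended across $A$ on the smooth manifold $X'$. The points requiring care are the verification that $U$ is dense and that the descent $s|_U = u^* \sigma$ is valid, which rest respectively on $A$ being proper and on the properness of $u$ over $V$ together with the normality of $X'$; once these are in place, the identity theorem performs the gluing.
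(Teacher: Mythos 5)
Your proof is correct and follows essentially the same route as the paper's: restrict the section to the preimage of the complement of a codimension $\geq 2$ subset of $X'$, descend it to a section of $L$ there, extend across that subset by the second Riemann/Hartogs extension theorem on the smooth $X'$, pull back, and conclude by density and injectivity of restriction. The only cosmetic difference is that you remove $A=u(E)$ and invoke $(u|_U)_*\cO_U=\cO_V$ for the proper modification, whereas the paper removes the entire non-isomorphism locus $S$ of $u$ so that the descent is a literal biholomorphism; the arguments for the second isomorphism are identical.
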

\begin{proof}
It is enough to show the surjectivity. 
Since $u: X \to X'$ is a bimeromorphic morphism, there exists a closed analytic subset $S \subset X'$ of codimension at least 2 such that the restriction of $u$ on $u^{-1}(X' \setminus S)$ is biholomorphic.

The first statement follows from the following diagram
$$
\begin{tikzcd}
{H^0(X',L)} \arrow[r, "u^*"] \arrow[d, "\simeq"] & {H^0(X,u^*L)} \arrow[r] & {H^0(X\setminus E,u^*L)} \arrow[d]    \\
{H^0(X' \setminus  S,L)} \arrow[rr, "\simeq"]  &                         & {H^0(X \setminus f^{-1}( S), u^*L)}
\end{tikzcd}
$$
since the right arrow is injective.
Note that the left arrow is an isomorphism by Hartogs' theorem. 
For any $s \in H^0(X, u^*L+E)$, by the first statement, $(s/s_E)|_{X \setminus E}=s'|_{X\setminus E}$ for some $s' \in H^0(X, u^*L)$ which implies $s=s's_E$ and thus the second isomorphism.
\end{proof}

We also recall the definition of non-polar divisors defined in \cite{FF79}.
For more information on the non-polar divisor, we refer to the paper \cite{Cam82}.
\begin{mydef}
An irreducible divisor on a complex manifold $X$ is called non-polar if it is not contained in the pole on any meromorphic function on $X$.
\end{mydef}
For any compact connected manifold $X$, we have the following algebraic reduction (cf. page 25 of \cite{Ueno}, or  Lemma 1 (page 163) of \cite{Cam81})
$$\begin{tikzcd}
X' \arrow[d,"a"]\arrow[r, "m"] & X \\ A
\end{tikzcd}$$
with $X'$ a smooth bimeromorphic model of $X$, $m$ a proper modification, $A$ a smooth projective variety and $a$ 
some surjective holomorphic map with connected fibres such that 
$\cM(X) \cong \cM(A) \cong \cM(X')$.
In this case, an irreducible divisor on $X'$ is non-polar if and only if its image under $a$ is $A$.
\begin{mylem}
Under the notations above, let $N$ be a non-polar divisor over $X'$ and $S$ be a divisor over $X'$ without a common irreducible component with $N$.
Then for any $p \geq 0$, the multiplication with the canonical section $s_{pN}$ of $pN$ induces an isomorphism
$$H^0(X',S)\simeq H^0(X',S+pN).$$
\end{mylem}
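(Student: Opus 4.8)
The plan is to prove that the map
$$\cdot\, s_{pN} : H^0(X',S) \longrightarrow H^0(X',S+pN), \qquad s \mapsto s\cdot s_{pN}$$
is an isomorphism by treating injectivity and surjectivity separately. Injectivity will be immediate: since $X'$ is connected and $s_{pN}\not\equiv 0$, the relation $s\cdot s_{pN}=0$ forces $s=0$. Thus everything reduces to surjectivity, and the natural way to attack it is to pass from sections of line bundles to meromorphic functions. I would use the standard identification, valid for any Cartier divisor $D$ on the smooth manifold $X'$, namely $H^0(X',D)=\{f\in\cM(X')^*:\ \div(f)+D\geq 0\}\cup\{0\}$, under which the canonical section $s_{pN}$ corresponds to the constant function $1$ (whose divisor plus $pN$ is just the effective divisor $pN$).

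Under this dictionary, a section $t\in H^0(X',S+pN)$ is the same datum as a meromorphic function $f$ with $\div(f)+S+pN\geq 0$, and the quotient $t/s_{pN}$ corresponds to the very same function $f$, now viewed in $\cO(S)$. Hence proving that $t/s_{pN}$ is a genuine (holomorphic) section of $\cO(S)$ amounts to upgrading the inequality $\div(f)+S+pN\geq 0$ to $\div(f)+S\geq 0$; once this is done, $s:=t/s_{pN}\in H^0(X',S)$ satisfies $s\cdot s_{pN}=t$ and surjectivity follows. I would verify the upgraded inequality component by component. For an irreducible divisor $D$ that is not a component of $N$ one has $\mathrm{ord}_D(N)=0$, so $\div(f)+S+pN\geq 0$ already gives $\mathrm{ord}_D(f)+\mathrm{ord}_D(S)\geq 0$. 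It then remains only to treat the finitely many components $N_j$ of $N$.

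The crucial and essentially only nontrivial input is the non-polar hypothesis. For a component $N_j$ of $N$, the assumption that $S$ and $N$ share no irreducible component gives $\mathrm{ord}_{N_j}(S)=0$, so the desired inequality at $N_j$ reads simply $\mathrm{ord}_{N_j}(f)\geq 0$, i.e. $f$ has no pole along $N_j$. But $f$ is a global meromorphic function on $X'$ and $N_j$ is non-polar, which by definition means that $N_j$ is not contained in the polar divisor of any meromorphic function; hence $\mathrm{ord}_{N_j}(f)\geq 0$, exactly as needed. (Equivalently one could invoke the characterisation that $N_j$ is non-polar iff $a(N_j)=A$, but the bare definition suffices.) Assembling the component-wise inequalities yields $\div(f)+S\geq 0$, which completes surjectivity. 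I do not expect a serious obstacle here: the whole content is the passage to orders of vanishing, where the non-polarity of $N$ kills precisely the potential poles along $N$ that the twist by $pN$ would otherwise permit, while the no-common-component hypothesis guarantees that $S$ imposes no constraint along $N$ that could interfere.
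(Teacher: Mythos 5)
Your proof is correct and follows essentially the same route as the paper: identify sections of $\cO(S+pN)$ with meromorphic functions satisfying a divisorial inequality, and use non-polarity of $N$ together with the no-common-component hypothesis to show the inequality $\div(f)+S+pN\geq 0$ is equivalent to $\div(f)+S\geq 0$. The only cosmetic difference is that the paper phrases the divisor of $f$ as $a^*\div(f)$ for $f\in\cM(A)$ (using $\cM(X')\cong\cM(A)$ and the characterisation that non-polar divisors map onto $A$), whereas you invoke the bare definition of non-polarity directly on $X'$; both are valid.
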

\begin{proof}
The sections of $H^0(X', S+pN)$ are the meromorphic functions $f \in \cM(A)$ such that
$$a^* \div(f)+S+pN \geq 0$$
which is equivalent to the condition that $a^* \div(f)+S \geq 0$ since $N$ is non polar and $S$ has no common irreducible component with $N$.
Thus
$$H^0(X',S)\simeq H^0(X',S+pN)$$
for any $p \geq 0$.
\end{proof}

\begin{mythm}
Let $L$ be a line bundle over a compact irreducible normal complex space $X$.
Let $k \in \N^*$ such that 
$kL$ is effective.
There exists a smooth projective variety $A$ (independent of $k$) an algebraic reduction of $X$ such that the rational function field of $A$ is isomorphic to $\cM(X)$ and there exists a $\Q-$effective divisor $D$ over $A$ such that for $m>0$ sufficient divisible,
$$H^0(A, mD) \cong H^0(X, mkL).$$
Such $A$ is unique up to bimeromorphism.
\end{mythm}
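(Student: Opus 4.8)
The plan is to transport the problem to the algebraic reduction and then read off sections as meromorphic functions with prescribed pole orders. Since the canonical ring is a bimeromorphic invariant, I would first resolve the singularities of $X$ (using \cite{Hir64}) and thereby assume $X$ smooth, then invoke the algebraic reduction to produce a smooth bimeromorphic model $m : X' \to X$ together with a fibration $a : X' \to A$ onto a smooth projective variety $A$, with $a^*$ inducing an isomorphism $\cM(A) \cong \cM(X')$. By Lemma 2 I would replace $a$ by a neat model, still preserving the canonical ring. Here $A$ is determined by $\cM(X)$ alone, hence is independent of $k$, and is unique up to bimeromorphism because any two projective models of $\cM(X)$ are bimeromorphic. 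Writing $L' := m^*L$ and fixing an effective divisor $G \in |kL'|$, we have $\cO(mkL') \cong \cO(mG)$, so that $H^0(X, mkL) \cong H^0(X', mkL')$ is identified with the space of $f \in \cM(X')$ satisfying $\div(f) + mG \geq 0$.

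The key observation is that, because $a^*$ is an isomorphism on function fields, every such $f$ is a pullback $f = a^*g$ with $g \in \cM(A)$, and then $\div(f) = a^*\div(g)$ is vertical: it has order zero along every component of $X'$ surjecting onto $A$ (the non-polar components) and along every component whose image has codimension $\geq 2$ (the $a$-exceptional components). Consequently the condition $a^*\div(g) + mG \geq 0$ is automatic along the non-polar and the $a$-exceptional components — this is precisely the content of Lemmas 4 and 3, the latter relying on neatness — and the only genuine constraints come from the components $V$ of $X'$ surjecting onto a prime divisor $T \subset A$. For such a $V$, writing $e_V$ for the multiplicity of $V$ in $a^*T$, the constraint reads $e_V\,\mathrm{ord}_T(\div g) + m\,\mathrm{ord}_V(G) \geq 0$.

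I would therefore define the effective $\Q$-divisor $D := \sum_T \big(\min_V \mathrm{ord}_V(G)/e_V\big)\, T$ on $A$, the inner minimum ranging over the components $V$ of $a^{-1}(T)$ surjecting onto $T$. With this choice the constraints above are exactly equivalent to $\div(g) + mD \geq 0$, so for every $m$ divisible enough to make $mD$ integral the assignment $g \mapsto a^*g$ yields $H^0(A, mD) \cong H^0(X', mkL') \cong H^0(X, mkL)$. Equivalently, writing $G_{\mathrm{vert}}$ for the part of $G$ supported on components surjecting onto divisors of $A$, the difference $S := mG_{\mathrm{vert}} - a^*(mD)$ is effective and partially supported on the fibres of $a$ (each minimizing $V$ escapes $\mathrm{supp}(S)$), so Campana's Lemma 1 gives $\cO_A(mD) \cong a_*\cO(mG_{\mathrm{vert}})$ and hence $H^0(A, mD) \cong H^0(X', mG_{\mathrm{vert}})$; combined with the removal of the non-polar and exceptional parts this recovers $H^0(X, mkL)$. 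In particular $D$ is independent of $m$, and it is $\Q$-effective since its coefficients are non-negative.

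The step I expect to be the main obstacle is the descent in the last paragraph. It requires $a^*T$ to be a genuine divisor with well-defined multiplicities $e_V$ — which is the reason Hironaka's flattening \cite{Hir75} enters the construction of the neat model — and, for the clean reformulation through Lemma 1, that over each relevant $T$ some component of $a^{-1}(T)$ escapes the support of $mG_{\mathrm{vert}} - a^*(mD)$, so that the remainder really is partially supported on the fibres. Pinning down these multiplicities and verifying partial support, while checking that only the divisibility (and not the size) of $m$ is needed to clear denominators, is the technical heart of the argument; the role of Lemmas 3 and 4 is to guarantee that the non-polar and $a$-exceptional directions, which are the source of the discrepancy between $G$ and $G_{\mathrm{vert}}$, genuinely contribute nothing to the spaces of sections.
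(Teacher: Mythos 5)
Your proposal follows essentially the same route as the paper: desingularise, pass to a neat algebraic reduction $a : X' \to A$, sort the components of a divisor $G \in |k m^* L|$ into non-polar, $a$-exceptional, and dominant-over-a-prime-divisor types, and define exactly the paper's divisor $D = \sum_T \bigl(\min_V \mathrm{ord}_V(G)/e_V\bigr) T$ (the paper's $m_G = \min_i g_i/k_i$), with Lemmas 1, 3 and 4 disposing of the first two types. The reformulation of sections as meromorphic functions $g \in \cM(A)$ with $\div(g) + mD \geq 0$ is also how the paper's Lemma 4 operates.

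There is, however, one genuine gap, located precisely at the spot you flag as the main obstacle. It is not true that $\div(a^* g) = a^* \div(g)$ has order zero along the $a$-exceptional divisors: if $g$ has a pole along a prime divisor $T \subset A$ and $V$ is an $a$-exceptional component of $a^{-1}(T)$, then $a^* T$ charges $V$ and $\mathrm{ord}_V(a^*\div(g))$ is negative, so the constraint along $V$ is not automatic. For the same reason your divisor $S = mG_{\mathrm{vert}} - a^*(mD)$ is in general not effective: writing $a^* T = \sum_i k_i D_i + R_T$ with $R_T$ $a$-exceptional, $G_{\mathrm{vert}}$ has coefficient zero along the components of $R_T$ while $a^*(mD)$ does not. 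This is exactly why the paper introduces, for each $T$ with positive minimum, an $a$-exceptional $\Q$-effective correction $R_T$ making $G' - m_T a^* T + R_T$ effective and partially supported on the fibres, and proves the divisor identity $k m^* L + R = a^* D + N + \PSSF(a)$ before pushing forward; neatness and Lemma 3 are consumed at that step, to guarantee $H^0(X', p\, m^*(kL)) \cong H^0(X', p\, m^*(kL) + pR)$ since $a$-exceptional implies $m$-exceptional. Once you insert this correction divisor your argument closes up and coincides with the paper's.
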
 
\begin{proof}

Up to a possible desingularisation of $X$, we can assume $X$ to be a compact connected complex manifold.
Take the notations before Lemma 4.
By Lemma 1, we can assume that $a$ is neat.

We claim that there exist $\Q-$effective divisors (as $kL$ is effective) such that
$$k m^* L+R= a^* D+E$$
where $R$ is an effective, $a$-exceptional divisor, $E$ is a sum of non-polar divisors $N$ and an effective divisor $\PSSF(a)$ partially supported on the fibres of $a$.
Note that $R$ is thus $m-$exceptional since $a$ is neat.
Thus for $l$ sufficient divisible
$$H^0(X, lkl) =H^0(X', m^*(lkL))=H^0(X', m^*(lkL)+lR)$$
by Lemma 3.

The construction of the above decomposition is as follows.
Let $D'$ be an irreducible component of $m^*(kL)$ such that $G:=a(D')$ is an irreducible divisor of $A$.
Then
$$a^* G=\sum_i k_i D_i+R$$
with $R$ $a-$exceptional and $D_i$ irreducible divisors such that $a(D_i)=G$.
Let 
$$G' =\sum_i g_i D_i$$
be the maximal effective divisor as a linear combination of $D_i$ such that $m^*(kL)-G'$ is effective.
Note that $G'$ is not trivial since $G' \geq D'$.

Similarly, let $N$ be the maximal effective divisor as a linear combination of non-polar irreducible components of $m^*(kL)$ such that $m^*(kL)-N$ is effective.
Here the support of $N$
is contained in support of $m^*(kL)$; thus, the set of such divisors is finite.
In general, the set of non-polar divisors is always finite by the results of \cite{Cam82}.

Define $m_G:=\min_i \frac{g_i}{k_i}$.
Then $m_G=0$ if and only if there exists $i$ such that $g_i=0$.
In this case, by definition, $G'$ is partially supported on the fibres of $a$.
Define $R_G=0$ in this case.
If $m_G>0$, there exists an $a-$exceptional $\Q-$effective divisor $R_G$ such that
$G'- m_G a^* G+R_G$ is $\Q-$effective and is partially supported on the fibres of $a$.

Consider
$m^*(kL)- \sum_G m_G a^* G$ where the sum is taken over all irreducible divisors $G$ that can be written as $a(D')$ for some irreducible component $D'$ of $m^*(kL)$.
Define
$$D:=\sum_G m_G  G.$$
Then $m^*(kL)- \sum_G m_G a^* G+R$ with $R:= \sum_G R_G$ (which is $a-$exceptional, $\Q-$effective) is a sum of non polar divisor $N$ and a $\Q-$effective divisor
$$\PSSF(a):=m^*(kL)+R-a^* D-N$$
partially supported on the fibres of $a$.

To relate the canonical ring of $kL$ to the canonical ring of some line bundle on $A$,
for any $p>0$ sufficient divisible such that $p m_G \in \Z$ for any $m_G$,
consider
$$H^0(X, pkL)=H^0(X', p m^* kL+pR)=H^0(X', pa^*D+pN+p\PSSF(a))=H^0(A, p D).$$
The third equality follows from Lemma 1 and 4.
(In fact, it is enough to take $p$ as a common multiple of all $k_i$. Note that up to $\Q-$linear equivalence, $\frac{1}{k} D$ is uniquely determined by $L$.)
\end{proof}
\begin{myrem}{\rm
In general, we hope to use the above theorem to construct the Okounkov body over an arbitrary compact normal irreducible complex space.
Let $L$ be a line bundle over a compact normal irreducible complex space $X$.
Let $v$ be a valuation of $\cM(X)$.
With the same notations as above,
we hope to define the Okounkov body of $(X, L)$ $\Delta_v(X, L)$
to be the Okounkov body of the algebraic reduction $(A,\frac{1}{k}D)$ which is defined in \cite{LM09} (Definition 4.3).
The difficulty is whether this definition depends on the choice of $A$ and the algebraic reduction.
If $X$ is Moishezon, it can be easily checked that it is the case.
}
\end{myrem}

However, we can still study some asymptotic behaviour of the canonical ring.

For the reader's convenience, we briefly recall the construction of the Okounkov body in the projective case.
Assume $\xi$ is the centre of $v$ over $A$.
(Its existence is deduced from the properness of $A$.)
Assume that $D$ is a line bundle over $A$.
For any $\sigma \in H^0(A, mD) \setminus \{0\}$,
we define naturally the valuation of $\sigma$ associated to $v$ as follows.
Let $e$ be a local trivialisation of $\cO(D)$ near $\xi$.
Then there exists a local holomorphic function $f$ such that $\sigma=f \cdot e$ near $\xi$ (over a Zariski open set).
Define 
$$v(\sigma):= v(f)$$
which can be easily shown to be independent of the choice of local trivialisation.

Denote $\Lambda_v=v(\cM(A)^*)$ which is a lattice in $V_v := v(\cM(A)^*) \otimes_\Z \R$.
Define in $V_v$,
the Okounkov body $\Delta_v(A, D)$ associated to $D$ as the closure (with respect to the euclidean topology) of all $\frac{1}{m} v(\sigma)$ for $\sigma \in H^0(A, mD) \setminus \{0\}$.
It can be proven to be equal to the closure of all $v(E)$ where $E$ is an effective $\Q-$divisor $\Q-$linearly equivalent to $D$ with respect to the euclidean topology on $V_v$.
Here for an irreducible divisor $E$,
we define $v(E)$ as the valuation $v$ of any local defining function of the divisor $E$.
We can extend by linearity to define the valuation $v$ of any $\Q-$divisor.
Thus we can extend the definition of Okounkov body to the equivalent class of $\Q-$divisors.




\begin{myex}
{\em 
Let $T$ be a generic torus such that $\cM(T)=\C$ the constant functions.
Let $X$ be the blow-up of a point in $T \times \P^n$.
Then the composition $\pi$ of the blow-up and the projection onto $\P^n$ gives the algebraic reduction of $X$.
In particular, we have that
$$\cM(X) \cong \cM(T \times \P^n) \cong \cM(\P^n).$$
Consider $L:= \pi^* \cO(1) \otimes \cO(E)$ where $E$ is the exceptional divisor of the blow up.
Then the construction of Theorem 1 provides for any $k>0$, $\frac{1}{k} D$ is $\Q-$linear equivalent to $\cO(1)$ such that for any $m \geq 0$,
$$H^0(X, mL)=H^0(\P^n, \cO(m)).$$
In this case, one may define the Okounkov body of $(X, L)$ as
$$\Delta_v(X,L):=\Delta_v(\P^n, \cO(m)).$$
}
\end{myex}
As a direct application of Theorem 1, we have that
\begin{mycor}
Let $L$ be a line bundle over a compact normal irreducible complex space $X$.
Then we have that the limit
$$\lim_{k \in \N(L), k \to \infty} \frac{h^0(X,kL)}{k^{\kappa(L)}}$$
exists where $\kappa(L)$ is the Kodaira-Iitaka dimension of $L$.
\end{mycor}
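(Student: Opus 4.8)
The plan is to transport the problem to the smooth projective variety $A$ produced by Theorem 1 and then invoke the projective Okounkov body theorem of \cite{LM09} and \cite{KK12} recalled in the introduction, which already establishes the corollary for projective varieties. Let $A$ be the algebraic reduction and let $D_L := \frac{1}{k} D$ be the $\Q$-divisor class furnished by Theorem 1 (independent of $k$ up to $\Q$-linear equivalence by the remark in its proof). The goal is to show that for all $n$
$$H^0(X, nL) \cong H^0(A, \lfloor n D_L \rfloor),$$
because the existence of
$$\lim_{n \in \N(D_L), n \to \infty} \frac{h^0(A, \lfloor n D_L \rfloor)}{n^{\kappa(D_L)}}$$
is exactly the projective statement recalled in the introduction, applied to the $\Q$-divisor $D_L$ on $A$ (where $\N(D_L):=\{n: h^0(A,\lfloor nD_L\rfloor)\neq 0\}$, and the theory of \cite{LM09} covers $\Q$-divisors). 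Once the displayed identification holds for every $n$, one reads off $\N(L)=\N(D_L)$ and $\kappa(L)=\kappa(D_L)$, and the corollary follows at once.

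The only point that Theorem 1 does not supply directly is that its isomorphism $H^0(X,mkL)\cong H^0(A,mD)$ is stated for $m$ sufficiently divisible, whereas the limit in the corollary runs over the whole semigroup $\N(L)$; filling these gaps is the crux. I would prove the all-$n$ identification from the divisorial description used in the proof of Theorem 1. Since $a:X'\to A$ is the algebraic reduction, every $f\in\cM(X')$ comes from $\cM(A)$ and $\div_{X'}(f)=a^*\div_A(f)$. Writing $m^*L = a^*D_L + N' + P' - R'$, where $N'$ is a $\Q$-combination of non-polar divisors, $P'$ is partially supported on the fibres of $a$, and $R'$ is effective and $a$-exceptional (the rescalings by $1/k$ of the divisors $N,\PSSF(a),R$ in the proof of Theorem 1), a section of $nL$ is an $f\in\cM(A)$ with
$$a^*\div_A(f) + n\big(a^*D_L + N' + P' - R'\big) \geq 0 \quad\text{on } X'.$$
Checking this prime by prime on $X'$ reduces it to a coefficient-wise condition on $A$: along horizontal components the bound holds automatically since $N'$ is effective; along $a$-exceptional components it holds because $P'\geq R'$ there (as $m^*L$ is effective); and along the vertical divisors over a prime $T\subset A$ the non-polar and partially-supported-on-fibres hypotheses collapse everything to $\mathrm{ord}_T\div_A(f) + n\,\mathrm{ord}_T(D_L)\geq 0$. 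As these are inequalities between integers, they are equivalent to $\div_A(f)+\lfloor nD_L\rfloor\geq 0$, i.e. $f\in H^0(A,\lfloor nD_L\rfloor)$, with no divisibility of $n$ required — which is exactly why the floor appears and why the identification is valid for all $n$.

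The step I expect to be the main obstacle is precisely this gap-filling, namely verifying that the defining inequalities along the vertical divisors over each $T\subset A$ all collapse, uniformly in $n$, to the single bound coming from the component of $a^{-1}(T)$ not contained in $P'$ guaranteed by the partially-supported-on-fibres hypothesis. Once this coefficient-wise reduction is in hand the argument is formal, and the corollary is immediate from the projective case. I would also record the complementary simplification that streamlines the semigroup bookkeeping: writing $d=\gcd\N(L)$, one has $\N(L)\subseteq d\N$ with $\N(L)$ containing every large multiple of $d$, so the limit over $\N(L)$ equals the limit along $d\N$; combined with $\kappa(dL)=\kappa(L)$ this lets one reduce, if desired, to the case where $\N(L)$ is cofinite in $\N$. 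This handles the $\gcd$ of the semigroup, but it does \emph{not} by itself remove the sufficient-divisibility restriction, so the all-$n$ refinement above remains the essential ingredient.
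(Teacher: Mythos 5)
There is a genuine gap: the central claim that $H^0(X,nL)\cong H^0(A,\lfloor nD_L\rfloor)$ for \emph{all} $n$ is false, and the prime-by-prime argument you give for it breaks at its first step. Already in the projective case (where $A$ is a projective model of $X$ and the whole reduction is essentially vacuous), take $X$ an elliptic curve and $L=\cO(p-q)$ with $p-q$ a nontrivial $2$-torsion class: then $k=2$, $D=0$, $D_L\equiv_\Q 0$, so $h^0(A,\lfloor nD_L\rfloor)=1$ for every $n$, while $h^0(X,nL)=0$ for all odd $n$; in particular your asserted consequence $\N(L)=\N(D_L)$ fails. The source of the error is that a section of $nm^*L$ is \emph{not} the same thing as an $f\in\cM(A)$ with $a^*\div(f)+n(a^*D_L+N'+P'-R')\geq 0$: the $\Q$-divisor $n(a^*D_L+N'+P'-R')$ is only $\Q$-linearly equivalent to $nm^*L$, and the line bundles $nm^*L$ and $\cO(\lfloor n(a^*D_L+N'+P'-R')\rfloor)$ differ by a class that is $\Q$-linearly trivial but need not be trivial; this torsion discrepancy is exactly what can kill every section of $nL$ for $n$ outside the divisible sublattice. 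Replacing the integrality of the coefficient inequalities by a floor is legitimate only after one has fixed an actual meromorphic section of $nm^*L$ whose divisor equals that $\Q$-divisor, and such a section exists only for $n$ sufficiently divisible --- which is precisely the restriction you were trying to remove.

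The paper closes the same gap by a different and correct mechanism, which you should compare with your final remark. Since $\N(L)$ is a sub-semigroup of $\N$, for $d$ large it coincides with an arithmetic progression: $\N(L)\cap[k_1d,\infty[=k_1\N\cap[k_1d,\infty[$. Choosing the divisibility constant $k_0$ of Theorem 1 to be a multiple of $k_1d$, every coset $kk_0+k_1i$ with $0\leq i<k_0/k_1$ is sandwiched via multiplication by fixed nonzero sections of $(k_0+k_1i)L$ and $(2k_0-k_1i)L$, giving $h^0(X,(k-1)k_0L)\leq h^0(X,kk_0L+k_1iL)\leq h^0(X,(k+2)k_0L)$; after dividing by $(kk_0+k_1i)^{\kappa(L)}$ both bounds converge to $\mu_v(\Delta_v(A,D))k_0^{-\kappa(L)}$ by the projective Okounkov-body theory applied to $D$, so every coset has the same limit and the limit over all of $\N(L)$ exists. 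This squeeze argument never needs the (false) all-$n$ identification. Your gcd observation at the end is the right first half of this argument, but it must be completed by the sandwich estimate rather than by the round-down isomorphism.
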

\begin{proof}
It is a direct application of Theorem 1 and the corresponding result in the projective case.
We briefly sketch the proof of the projective case for the reader's convenience.

Take the same notations as in Theorem 1.

Recall that the rational rank of $v$ is defined as the rank of $\Lambda_v$, which is the maximal number of $\Z-$linear independent elements in $\Lambda_v$.
It can be shown that the rational rank of $v$ is less than the dimension of the algebraic reduction $A$, which is also equal to the transcendental degree of $\cM(X)$ over $\C$.
Fix $v$ a valuation with the maximal rational rank, which is always possible.

Let $\mu_v$ be the Lebesgue measure on $V_v$ normalised by the lattice $\Lambda_v$.
By Theorem 1, there exists $k_0$ sufficient divisible such that $k_0 L$ is effective, and there is an effective line bundle $D$ over $A$
such that
$$H^0(A, mD) \cong H^0(X, mk_0L) (\forall m \geq 0).$$
In particular,
$$\kappa(D)=\kappa(k_0L)=\kappa(L).$$
By the theory of Okounkov body, we have that for all valuation $v$ with maximal rational rank, 
$$\lim_{k k_0 \in \N(L), k \to \infty} \frac{h^0(X,kk_0 L)}{(kk_0)^{\kappa( L)}}=\lim_{k k_0 \in \N(L), k \to \infty} \frac{h^0(A,k D)}{(kk_0)^{\kappa( D)}}=\mu_v(\Delta_v(A,D)) k_0^{-\kappa(L)}.$$

Since $\N(L)$ is a semi-group, there exists $d$ large enough such that 
$$\N(L) \cap [k_1 d ,\infty [ =k_1 \N \cap [k_1 d, \infty[$$
for some $k_1 >0$.
Without loss of generality, we can assume that $k_0$ is a multiple of $k_1 d$. 
In particular $k_0 L, k_0  L+ k_1 L, \cdots, k_0 L+(k_0-k_1)L$ are all effective and we have inclusions for any $k\geq 1 $ and any $0 \leq i < k_0/k_1$ that
$$\cO((k-1 )k_0  L) \subset \cO(kk_0 L+k_1 i L) \subset \cO((k+2) k_0 L).$$
Thus we have for $0 \leq i \leq k_0/k_1-1$, 
$$\lim_{ k \to \infty} \frac{h^0(X,kk_0L+k_1 i L)}{(kk_0+k_1 i)^{\kappa( L)}}=\mu_v(\Delta_v(A,D))k_0^{-\kappa(L)}.$$
It implies the conclusion that
$$\lim_{k \geq d, k \to \infty} \frac{h^0(X,k_1kL)}{(k_1k)^{\kappa( L)}}=\lim_{ k \to \infty} \frac{h^0(X,kk_0 L+k_1 i L)}{(kk_0+k_1 i)^{\kappa( L)}}=\mu_v(\Delta_v(A,D))k_0^{-\kappa(L)}$$
since the right-hand side is independent of $i$.
\end{proof}
The existence of such limit is also previously studied in \cite{Iit71}, \cite{DEL00}.
One can also show the differentability of volume function on a Moishezon manifold. 
To show it, we need the following observation on the definition of movable intersection product in \cite{Bou02}, \cite{BEGZ},  \cite{BDPP} on a compact K\"ahler manifold.
\begin{myrem}
{\em 
Let $(Y, \omega)$ be a compact K\"ahler manifold.
Let $\pi:\tilde{Y} \to Y$ be a K\"ahler modification.
Let $\alpha_j$ be big classes on $Y$ such that $\pi^* \alpha_j$ are still big classes on $\tilde{Y}$.
(For example, this is the case when $\alpha_j$ are the first Chern classes of big line bundles over $Y$.)
By the construction of the movable positive product over a compact K\"ahler manifold,
we have 
$$\pi_* \langle \pi^* \alpha_1, \cdots, \pi^* \alpha_k \rangle=\langle  \alpha_1, \cdots,  \alpha_k \rangle.$$
In particular, let $L$ be a big line bundle over a Moishezon manifold $X$.
We can define the movable positive product of $c_1(L)$ as follows.
Let $\pi:\tilde{X} \to X$ be a modification of $X$ such that $\tilde{X}$ is a projective manifold.
Define for any $p >0$,
$$\langle  c_1(L)^p \rangle:=\pi_* \langle \pi^* c_1(L)^p \rangle$$
defined in $H^{p,p}_{BC}(X, \C)$.
By the filtration property of the modification, we can easily check that the product is independent of the choice of modification.
In other words, we have the same product for the push forward from any modification fo $X$ such that $\tilde{X}$ is a projective manifold.
}
\end{myrem}
\begin{myrem}
{\em 
Let $L$ be a big line bundle over a compact Moishezon manifold $X$ of dimension $n$.
Then for any $\xi \in \mathrm{NS}(X) \otimes_\Z \Q$, we have
$$\lim_{t \in \Q, t \to 0+} \frac{\Vol(L+t \xi)-\Vol(L)}{t}=n\langle c_1(L)^{n-1} \rangle \cdot c_1(\xi)$$
when the movable positive product is defined as in the previous remark.
The proof uses the birational invariance of the volume and reduces the case to a smooth projective bimeromorphic model.
The projective case is proven in \cite{BFJ}.

One can also easily get a weak version of Theorem 1.6 and Remark 3.5 of \cite{LX19}. Let $L_1, \cdots, L_n$ be big line bundles over a compact Moishezon manifold $X$ of dimension $n$, then we
have
$$\Vol(L_1+L_2)^{1/n}\geq \Vol(L_1)^{1/n}+\Vol(L_2)^{1/n},\langle c_1( L_1), \cdots,c_1( L_n )\rangle \geq \Vol(L_1)^{1/n} \cdots \Vol(L_n)^{1/n}.$$
If the equality is obtained in each above inequalities,  $\langle c_1(L_i) \rangle (\forall i)$ are proportional.
}
\end{myrem}

\textbf{Acknowledgement} I thank Jean-Pierre Demailly, my PhD supervisor, for his guidance, patience and generosity. 
I would like to thank my post-doc mentor Mihai P\u{a}un for many supports.
I would like to thank Sébastien Boucksom for some very useful suggestions on this objective.
In particular, I warmly thank Professeur Campana for providing the essential result in this note, allowing me to use it and reading and improving the manuscript.
I would also like to express my gratitude to colleagues of Institut Fourier for all the interesting discussions we had. This work is supported by the European Research Council grant ALKAGE number 670846 managed by J.-P. Demailly and DFG Projekt Singuläre hermitianische Metriken für Vektorbündel und Erweiterung kanonischer Abschnitte managed by Mihai P\u{a}un..
  
\end{document}